\documentclass[12pt, twoside, leqno]{article}
\usepackage{hyperref}



\usepackage{amsmath,amsthm}
\usepackage{amssymb}

\usepackage{enumitem}

\usepackage{graphicx}

\usepackage[T1]{fontenc}


\pagestyle{myheadings}
\markboth{Yagub N. Aliyev}{On integer linear combinations}



\newtheorem{theorem}{Theorem}[section]
\newtheorem{corollary}[theorem]{Corollary}
\newtheorem{lemma}[theorem]{Lemma}



\theoremstyle{definition}

\newtheorem{remark}[theorem]{Remark}



\numberwithin{equation}{section}


\frenchspacing

\textwidth=13.5cm
\textheight=23cm
\parindent=16pt
\oddsidemargin=-0.5cm
\evensidemargin=-0.5cm
\topmargin=-0.5cm





\begin{document}


\baselineskip=17pt


\title{On integer linear combinations of terms of rational cycles for the generalized 3x+1 problem}

\author{Yagub N. Aliyev\\
School of IT and Engineering\\ 
ADA University\\
Ahmadbey Aghaoglu str. 61 \\
Baku 1008, Azerbaijan\\
E-mail: yaliyev@ada.edu.az}
\date{}

\maketitle


\renewcommand{\thefootnote}{}

\footnote{2020 \emph{Mathematics Subject Classification}: Primary 11B75; Secondary 11A63, 11D88.}

\footnote{\emph{Key words and phrases}: $3x+1$ problem, Collatz conjecture, rational cycles, integer linear combinations.}

\renewcommand{\thefootnote}{\arabic{footnote}}
\setcounter{footnote}{0}


\begin{abstract}
In the paper, some special linear combinations of the terms of rational cycles of generalized Collatz sequences are studied. It is proved that if the coefficients of the linear combinations satisfy some conditions then these linear combinations are integers. The discussed results are demonstrated on some examples. In some particular cases the obtained results can be used to explain some patterns of digits in $p$-adic representations of the terms of the rational cycles.
\end{abstract}

\section{Introduction} Collatz conjecture or $3x+1$ problem claims that for any positive integer $x_0$, the recursive sequence defined for $n\ge 0$ by $x_{n+1}=S(x_n)=\frac{3x_n+1}{2}$ if $x_n$ is odd and $x_{n+1}=T(x_n)=\frac{x_n}{2}$ if $x_n$ is even, there is a positive integer $N$ such that $x_N=1$ (see \cite{lag}). It is known that this holds true for almost all $x_0$ in the sense of some density. See \cite{terras}, \cite{ev}, \cite{kor}, \cite{tao}, and the references therein for the works in this direction. Another approach is to find all cycles of this recursive sequence. It is conjectured that there are only finitely many such cycles. The only known cycle is the one generated by $x_0=1$. There are more cycles if $x_0$ is allowed to be zero or a negative integer but it is conjectured that their number is also finite (The Finite Cycles Conjecture, see \cite{lag}, \cite{lag2}). The only known non-positive cycles are the ones generated by $x_0=0$, $x_0=-1$, $x_0=-5$, and $x_0=-17$. If a sequence of functions consisted of several $S$ and $T$ is given, then one can speak about rational cycles (see \cite{aliyev}). There is a rational number $x_0$ such that if the operations $S$ and $T$ are applied in the given order, then
the final result is again $x_0$. Rational cycles generated by such $x_0$ have some interesting properties (see \cite{lag2}). Many generalizations of Collatz conjecture were considered by replacing $S$ and $T$ operations by more general $S_k(x)=\frac{p_ix+k}{q}$. One can find such generalizations in \cite{matt}, \cite{matt2}, \cite{levy}. See \cite{con}, \cite{leht}, where similar generalizations are used to prove some results related to undecidability properties. See \cite{laar}, \cite{rouet}, where these generalizations are considered in the context of $2$-adic numbers and $q$ based numeral systems.

In this paper we focus on properties of the terms of rational cycles $x_i$, which show that these rational numbers are "integer like". A linear combination with integer coefficients of two integers is again an integer. In the paper it is proved that under some conditions over the coefficients of the linear combination, the rational numbers $x_i$ and $x_{i+b}$ also form integer linear combinations. Two worked out examples demonstarting the results on particular cases are given. Some applications of these results explaining peculiar patterns of digits in $p$-adic representations of these rational numbers $x_i$ are also discussed.

\section{Notations and lemmas}
Consider composition $P=B_0\circ B_1\circ \ldots \circ B_{n-1}$ of functions $B_i (x)=\frac{p_i x+k_i}{q}$, where $n>1$, $k_i$ are integers, $p_i,q$ are non-zero integers such that $(p_i,q)=1$ for $i=0,1,\ldots ,n-1$. When it is necessary to extend the index $i$, beyond the interval $[0,n-1]$, we suppose that $B_i=B_j$ if $i\equiv j\  (\bmod \ n)$. Consider equation $B_0\circ B_1\circ \ldots \circ B_{n-1}(x)=x$, which can also  be written as
$$
\frac{p_0 \frac{p_1\frac{\frac{p_{n-2} \frac{p_{n-1} x+k_{n-1}}{q}+k_{n-2}}{\begin{matrix}
q\\
\vdots 
\end{matrix}}}{q} +k_1}{q}+k_0}{q} =x.
$$
Note that its solution $x_0$ is a rational number (cf. formula 1.2 and 1.3, \cite{lag2}):
$$
x_0=\frac{p_0p_1\ldots p_{n-2}k_{n-1}+p_0p_1\ldots p_{n-3}k_{n-2}q+\ldots +p_0k_1 q^{n-2}+k_0q^{n-1}}{q^n-p_0p_1\ldots p_{n-1}}.
$$
Similarly, consider equations $B_i\circ B_{i+1}\circ \ldots  \circ B_{i+n-1}(x)=x$ for $i=0,1,\ldots ,n-1$. Note that their solutions $x_i$ are also rational numbers:
$$
x_i=\frac{p_ip_{i+1}\ldots p_{i+n-2}k_{i-1}+p_ip_{i+1}\ldots p_{i+n-3}k_{i-2}q+\ldots +p_ik_{i+1} q^{n-2}+k_iq^{n-1}}{q^n-p_0p_1\ldots p_{n-1}},
$$
where all the indices are taken modulo $n$. In the following, it is assumed that $x_i=x_j$ if $i\equiv j (\bmod n)$. Consider also numbers $U_i=\frac{q^{i}}{q^n-p_0p_1\ldots p_{n-1}},$ for $i=0,1,\ldots ,n$. Note that $U_n=p_0p_1\ldots p_{n-1}U_0+1$. Note also that
$$
x_i=p_ip_{i+1}\ldots p_{i+n-2}k_{i-1}U_0+p_ip_{i+1}\ldots p_{i+n-3}k_{i-2}U_1+\ldots +p_ik_{i+1}U_{n-2}+k_iU_{n-1}.
$$
Note that there are infinitely many pairs of non-zero integers $\alpha,\beta$ and integers $b$, such that $0<b<n$ and $\alpha U_0+\beta U_b$ is an integer or equivalently, $q^n-p_0p_1\ldots p_{n-1}|\alpha +\beta q^b$. Indeed, one can take, for example, $\alpha =k$, $\beta =-kq^{\phi (|q^n-p_0p_1\ldots p_{n-1}|)-1}$, and $b=1$, where $k=1,2,\ldots $ and $\phi$ is Euler's totient function. If $\alpha$ and $\beta$ are fixed then one can ask if such $b$ exists. The answer to this question depends on the choice of $\alpha$ and $\beta$ . For example, if $q=2$, $n=4$, $p_0=3$, $p_1=p_2=p_3=1$, then $q^n-p_0p_1p_2p_3=13$. If $\alpha=1, \beta= 1$ then such $b$ ($0<b<4$) does not exist. But if $\alpha=9, \beta= 1$ then $b=2$ satisfies the condition.
\begin{lemma}{If $\alpha U_0+\beta U_b$ is an integer, then $p_0 p_1\ldots p_{n-1}\beta U_0+\alpha U_{n-b}$ is also an integer.}
\end{lemma}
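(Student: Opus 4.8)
The plan is to reduce both the hypothesis and the conclusion to plain divisibility statements about the common denominator, and then to move between them by multiplying through by a suitable power of $q$. Write $P := p_0p_1\cdots p_{n-1}$ and $D := q^n - P$, so that $U_i = q^i/D$ for each $i$. As already noted in the text, the hypothesis that $\alpha U_0 + \beta U_b = (\alpha + \beta q^b)/D$ is an integer is equivalent to
$$
D \mid \alpha + \beta q^b .
$$
The target quantity is $P\beta U_0 + \alpha U_{n-b} = (P\beta + \alpha q^{n-b})/D$, which (since $0<b<n$ makes $U_{n-b}$ one of the quantities defined above) is an integer exactly when $D \mid P\beta + \alpha q^{n-b}$. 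So the whole lemma amounts to the implication $D \mid \alpha + \beta q^b \Rightarrow D \mid P\beta + \alpha q^{n-b}$.

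To establish this implication, I would multiply the known divisible quantity $\alpha + \beta q^b$ by $q^{n-b}$, obtaining $\alpha q^{n-b} + \beta q^n$, which is of course still divisible by $D$. The key observation is that $q^n \equiv P \pmod{D}$, read directly off $D = q^n - P$ (equivalently, this is the relation $U_n = P\,U_0 + 1$ recorded earlier). Substituting $q^n = P + D$ gives
$$
q^{n-b}(\alpha + \beta q^b) = \alpha q^{n-b} + \beta q^n = (\alpha q^{n-b} + P\beta) + \beta D .
$$
Since the left-hand side and the term $\beta D$ are both divisible by $D$, so is $\alpha q^{n-b} + P\beta$, which is precisely the conclusion.

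The argument has essentially no obstacle: the only ingredients are the two reformulations via $U_i = q^i/D$ and the single congruence $q^n \equiv P \pmod{D}$. One point worth recording is that the passage is reversible: multiplying the conclusion's quantity $P\beta + \alpha q^{n-b}$ by $q^b$ and reducing via $q^n \equiv P$ yields $P(\alpha + \beta q^b) \pmod{D}$, and since $(p_i,q)=1$ forces $\gcd(P,D)=1$, divisibility of $P(\alpha+\beta q^b)$ by $D$ returns the hypothesis. Thus the two integrality conditions are in fact equivalent, reflecting the symmetry $(\alpha,\beta,b)\leftrightarrow(P\beta,\alpha,n-b)$ that the lemma encodes.
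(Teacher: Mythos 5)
Your proof is correct, and it reduces the lemma to the same divisibility statement as the paper does; the difference lies in the direction of the key manipulation. The paper starts from the \emph{target} quantity: it multiplies $p_0p_1\cdots p_{n-1}\beta + \alpha q^{n-b}$ by $q^b$, rewrites the product as $p_0p_1\cdots p_{n-1}(\alpha+\beta q^b) + \alpha(q^n - p_0p_1\cdots p_{n-1})$, and then has to \emph{cancel} the factor $q^b$, which requires the coprimality $(q,\, q^n - p_0p_1\cdots p_{n-1})=1$. You instead start from the \emph{hypothesis}: multiplying $\alpha+\beta q^b$ by $q^{n-b}$ and reducing $q^n$ modulo $D=q^n-P$ lands you directly on the target plus a multiple of $D$, so no cancellation -- and hence no coprimality assumption -- is needed at all. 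Your version is thus slightly more economical and would remain valid even without the standing hypothesis $(p_i,q)=1$; the paper's version needs that hypothesis here (it is what guarantees $(q,D)=1$), and reuses the same cancellation device later in the proof of Corollary 3.2. Your closing observation that the implication reverses, making the two integrality conditions equivalent, matches the paper's later remark that Lemma 2.1 can be stated as an ``iff''; note that your reverse direction does use $\gcd(P,D)=1$, so the symmetry, unlike your forward implication, genuinely relies on $(p_i,q)=1$.
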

\begin{proof} Our claim is equivalent to prove that if $q^n-p_0p_1\ldots p_{n-1}|\alpha +\beta q^b$ then $$q^n-p_0p_1\ldots p_{n-1}| p_0 p_1\ldots p_{n-1}\beta+ \alpha q^{n-b}.$$ Indeed, since $(q,q^n-p_0p_1\ldots p_{n-1})=1$ and
$$q^b(p_0 p_1\ldots p_{n-1}\beta+ \alpha q^{n-b})= p_0 p_1\ldots p_{n-1}\beta q^b+ \alpha q^{n},$$
it is sufficient to show that $$q^n-p_0p_1\ldots p_{n-1}| p_0 p_1\ldots p_{n-1}\beta q^b+ \alpha q^{n},$$ which follows from
$$p_0 p_1\ldots p_{n-1}\beta q^b+ \alpha q^{n}=p_0 p_1\ldots p_{n-1}(\alpha+\beta q^b)+ \alpha (q^{n}-p_0 p_1\ldots p_{n-1}).$$
\end{proof}
\begin{lemma}{If $\alpha U_0+\beta U_b$ is an integer, then for $i=0,1,2,\ldots$ the numbers $\alpha U_i+\beta U_{i+b}$ and $p_0 p_1\ldots p_{n-1}\beta U_i+\alpha U_{n+i-b}$ are also integers.}
\end{lemma}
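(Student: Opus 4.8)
The plan is to reduce everything to the two cases $i=0$, which are already in hand, by exploiting that the $U_j$ are powers of $q$ over a single fixed denominator. Reading the defining formula $U_j=q^{j}/(q^{n}-p_0p_1\ldots p_{n-1})$ for every index $j\ge 0$ (the cyclic convention of the paper applies to the $x_i$, not to the $U_i$), we have the elementary scaling identity $U_{i+j}=q^{i}U_{j}$. Thus each member of the two families is literally $q^{i}$ times its $i=0$ base, and the whole argument amounts to the remark that multiplication by the integer $q^{i}$ preserves integrality.

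Concretely, I would first record the two base integers: $\alpha U_0+\beta U_b$ is an integer by hypothesis, while $p_0p_1\ldots p_{n-1}\beta U_0+\alpha U_{n-b}$ is an integer by Lemma 2.1. For the first family I would then write, using $U_i=q^{i}U_0$ and $U_{i+b}=q^{i}U_b$,
$$\alpha U_i+\beta U_{i+b}=q^{i}\bigl(\alpha U_0+\beta U_b\bigr),$$
an integer times $q^{i}$, hence an integer. For the second family the same manipulation, now with $U_i=q^{i}U_0$ and $U_{n+i-b}=q^{i}U_{n-b}$, gives
$$p_0p_1\ldots p_{n-1}\beta U_i+\alpha U_{n+i-b}=q^{i}\bigl(p_0p_1\ldots p_{n-1}\beta U_0+\alpha U_{n-b}\bigr),$$
which is $q^{i}$ times the integer supplied by Lemma 2.1. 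One may equivalently package this as an induction on $i$ whose inductive step is the single-step relation $U_{j+1}=qU_j$, the base cases being exactly the two statements just cited.

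I do not anticipate a real obstacle in the computation itself; the one point worth flagging is conceptual, namely why two separate families appear. They are the two phases of a single phenomenon: as the index $i+b$ crosses the threshold $n$, the relation $U_n=p_0p_1\ldots p_{n-1}U_0+1$ forces the coefficient $\beta$ to pick up the factor $p_0p_1\ldots p_{n-1}$ and re-pairs $\alpha$ with an $U_{n+i-b}$-type term, which is precisely the combination that Lemma 2.1 certifies to be an integer. Hence the genuine content resides in Lemma 2.1, and the present lemma is its stable-under-scaling consequence.
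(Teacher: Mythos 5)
Your proof is correct and is essentially the paper's own argument: the paper likewise multiplies the two base integers, $\alpha U_0+\beta U_b$ (hypothesis) and $p_0p_1\ldots p_{n-1}\beta U_0+\alpha U_{n-b}$ (Lemma 2.1), by the integer $q^i$ and identifies the results with the two claimed families via $U_{i+j}=q^iU_j$. Your explicit remark that the formula $U_j=q^j/(q^n-p_0p_1\ldots p_{n-1})$ must be read for all $j\ge 0$ (no cyclic convention on the $U$'s) is a correct and worthwhile clarification of a point the paper leaves implicit.
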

\begin{proof} By mutiplying $\alpha U_0+\beta U_b$ and $p_0 p_1\ldots p_{n-1}\beta U_0+\alpha U_{n-b}$, which are both integers, by integer $q^i$, we obtain that both of the numbers $\alpha U_i+\beta U_{i+b}$ and $p_0 p_1\ldots p_{n-1}\beta U_i+\alpha U_{n+i-b}$ are integers.
\end{proof}
\section{Main results}
\begin{theorem}{If $\alpha U_0+\beta U_b$ is an integer, then for any $i$, satisfying $0\le i<i+b< n$, the number $\alpha x_i+\beta p_i p_{i+1}\ldots p_{i+b-1} x_{i+b}$ is also an integer.}
\end{theorem}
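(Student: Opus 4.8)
The plan is to use the explicit expansion of $x_i$ in terms of the $U_j$ displayed in Section 2, namely $x_i = \sum_{j=0}^{n-1} a_{i,j} U_j$ where each coefficient $a_{i,j} = k_{i-1-j}\, p_i p_{i+1}\cdots p_{i+n-2-j}$ is an \emph{integer} (a product of the integers $p_l$ with a single $k$), and then to rewrite the target quantity $\alpha x_i + \beta P_{i,b}\, x_{i+b}$, with $P_{i,b} := p_i p_{i+1}\cdots p_{i+b-1}$, as an explicit integer-coefficient combination of the two families of integers produced by Lemma 2.2, namely $\alpha U_m + \beta U_{m+b}$ and $p_0 p_1\cdots p_{n-1}\,\beta U_m + \alpha U_{n+m-b}$.

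First I would substitute the expansions to obtain
$$\alpha x_i + \beta P_{i,b}\, x_{i+b} = \sum_{j=0}^{n-1} a_{i,j}\,\alpha U_j + \sum_{j=0}^{n-1} P_{i,b}\, a_{i+b,j}\,\beta U_j,$$
and match the two sums term by term. The key algebraic fact is the product-merging identity $a_{i,j} = P_{i,b}\, a_{i+b,\,j+b}$, valid for $0\le j\le n-1-b$ because the $k$-indices agree modulo $n$ while the two factor ranges $[i,\,i+b-1]$ and $[i+b,\,i+n-2-j]$ are adjacent and merge into $[i,\,i+n-2-j]$. Using it, for each $j=0,1,\ldots,n-1-b$ the term $a_{i,j}\,\alpha U_j$ and the term $P_{i,b} a_{i+b,j+b}\,\beta U_{j+b}=a_{i,j}\,\beta U_{j+b}$ combine into $a_{i,j}\,(\alpha U_j + \beta U_{j+b})$, which is an integer by the first conclusion of Lemma 2.2.

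This consumes $\alpha U_0,\ldots,\alpha U_{n-1-b}$ and $\beta U_b,\ldots,\beta U_{n-1}$, leaving only $\alpha U_{n-b},\ldots,\alpha U_{n-1}$ and $\beta U_0,\ldots,\beta U_{b-1}$. These I would pair via the second conclusion of Lemma 2.2: for $m=0,1,\ldots,b-1$ I would group the $\beta U_m$ term with the $\alpha U_{n+m-b}$ term into $a_{i,\,n+m-b}\bigl(p_0 p_1\cdots p_{n-1}\,\beta U_m + \alpha U_{n+m-b}\bigr)$, an integer. The identity needed here is $P_{i,b}\, a_{i+b,m} = (p_0\cdots p_{n-1})\, a_{i,\,n+m-b}$, which reduces to $\prod_{l=i}^{i+b+n-2-m} p_l = (p_0\cdots p_{n-1})\,\prod_{l=i}^{i+b-2-m} p_l$; this holds because the full-period product $p_0\cdots p_{n-1}$ equals the product of any $n$ consecutive factors $p_l$ and can therefore be inserted as the block indexed by $[i+b-1-m,\,i+b-2-m+n]$ to fill exactly the gap. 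Summing the two families then exhibits $\alpha x_i + \beta P_{i,b}\, x_{i+b}$ as a sum of integer multiples of integers.

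The main obstacle is the index bookkeeping modulo $n$: one must verify the two product-merging identities with the $k$- and $p$-indices correctly reduced, and check that the hypothesis $0\le i<i+b<n$ is precisely what keeps all four index families $j$, $j+b$, $m$, $n+m-b$ inside $\{0,1,\ldots,n-1\}$ and makes the relevant factor ranges adjacent rather than overlapping or gapped. Once those two identities are in hand, the conclusion is immediate.
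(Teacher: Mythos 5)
Your proof is correct and is essentially the paper's own argument: the paper also expands both $x_i$ and $x_{i+b}$ in the $U_j$ basis and pairs $\alpha U_m$ with $\beta U_{m+b}$ (or, when the index wraps, with $p_0p_1\cdots p_{n-1}\beta U_{m+b-n}$), invoking Lemmas 2.1 and 2.2 exactly as you do. The only difference is organizational — the paper labels the resulting pairs by the common factor $k_j$ (its quantities $M_j$) while you label them by the $U$-index, but the pairing and the product-merging identities are identical.
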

\begin{proof} Note that we can write
$$
\alpha x_i+\beta p_i p_{i+1}\ldots p_{i+b-1} x_{i+b}
$$
$$
=\alpha (p_ip_{i+1}\ldots p_{i+n-2}k_{i-1}U_0+p_ip_{i+1}\ldots p_{i+n-3}k_{i-2}U_1+\ldots +p_ik_{i+1}U_{n-2}+k_iU_{n-1})
$$
$$
+\beta p_i p_{i+1}\ldots p_{i+b-1}(p_{i+b}p_{i+b+1}\ldots p_{i+b+n-2}k_{i+b-1}U_0
$$
$$
+p_{i+b}p_{i+b+1}\ldots p_{i+b+n-3}k_{i+b-2}U_1+\ldots +p_{i+b}k_{i+b+1} U_{n-2}+k_{i+b}U_{n-1})
$$ 
$$
=k_0M_0+k_1M_1+\ldots +k_{n-1}M_{n-1},
$$ 
where
$$
M_j=
    \begin{cases}
        p_i p_{i+1}\ldots p_{j-1} (\alpha U_{n+i-1-j}+\beta p_0 p_1\ldots p_{n-1} U_{i+b-1-j}) & \text{if } i\le j<i+b,\\
        p_i p_{i+1}\ldots p_{n+j-1} (\alpha U_{i-1-j}+\beta U_{i+b-1-j}) & \text{if } 0\le j<i,\\
        p_i p_{i+1}\ldots p_{j-1} (\alpha U_{n+i-1-j}+\beta U_{n+i+b-1-j}) & \text{if } i+b\le j<n.
    \end{cases}
$$
By Lemma 2.1 and Lemma 2.2, all $M_j$, for $j=0,1,\ldots n-1$, are integers and therefore the claim is true.
\end{proof}
\begin{corollary}{If $\alpha U_0+\beta U_b$ is an integer, then for any $i$, the number $\alpha x_i+\beta p_i p_{i+1}\ldots p_{i+b-1} x_{i+b}$ is also an integer.}
\end{corollary}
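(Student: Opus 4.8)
The plan is to prove the stronger, uniform statement that
$$E_i := \alpha x_i + \beta\, p_i p_{i+1}\cdots p_{i+b-1}\, x_{i+b}$$
is an integer for \emph{every} integer $i$ at once, with no case distinction on the position of $i$ or on whether $i+b$ exceeds $n$. Write $D = q^n - p_0 p_1\cdots p_{n-1}$ for the common denominator appearing in the formula for the $x_i$. Two facts drive the argument. First, the cycle relation $x_i = B_i\circ B_{i+1}\circ\cdots\circ B_{i+b-1}(x_{i+b})$, which after clearing denominators reads $q^b x_i = p_i p_{i+1}\cdots p_{i+b-1}\, x_{i+b} + N_{i,b}$, where $N_{i,b} = k_i q^{b-1} + p_i k_{i+1} q^{b-2} + \cdots + p_i\cdots p_{i+b-2} k_{i+b-1}$ is visibly an integer. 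Second, the explicit formula for $x_i$ exhibits it as an integer divided by $D$, so $D x_i \in \mathbb{Z}$ for all $i$.

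First I would substitute the first fact into $E_i$:
\[
E_i = \alpha x_i + \beta\bigl(q^b x_i - N_{i,b}\bigr) = (\alpha + \beta q^b)\, x_i - \beta N_{i,b}.
\]
Since $\beta N_{i,b}$ is an integer, it remains only to show $(\alpha + \beta q^b)\, x_i \in \mathbb{Z}$. Here the hypothesis enters: because $U_0 = 1/D$ and $U_b = q^b/D$, we have $\alpha U_0 + \beta U_b = (\alpha + \beta q^b)/D$, so the assumption that $\alpha U_0 + \beta U_b$ is an integer is exactly the statement $\alpha + \beta q^b = D(\alpha U_0 + \beta U_b)$ with $\alpha U_0 + \beta U_b \in \mathbb{Z}$. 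Hence
\[
(\alpha + \beta q^b)\, x_i = (\alpha U_0 + \beta U_b)\,(D x_i),
\]
a product of two integers, and therefore $E_i \in \mathbb{Z}$. Since $i$ was arbitrary, this settles the corollary (and in fact subsumes the Theorem, as the restriction $0\le i<i+b<n$ is never used).

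The main thing to nail down is the cycle relation in the first fact, for \emph{all} $i$ (including the wrap-around range $n-b \le i \le n-1$ that the Theorem does not cover). I would justify $x_i = B_i(x_{i+1})$ from the defining fixed-point equations: the $n$-fold composition $B_i\circ\cdots\circ B_{i+n-1}$ is an affine map with multiplier $p_0\cdots p_{n-1}/q^n \ne 1$ (as $D \ne 0$), so it has a unique fixed point; applying $B_i$ to the fixed-point equation for $x_{i+1}$ shows $B_i(x_{i+1})$ is itself a fixed point of this map, whence $B_i(x_{i+1}) = x_i$. Iterating $b$ times gives the relation used above, valid for all $i$ under the convention that indices are read modulo $n$. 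Alternatively, one can avoid the recurrence entirely and verify $q^b x_i - p_i\cdots p_{i+b-1} x_{i+b} \in \mathbb{Z}$ directly from the stated $U$-expansion of $x_i$, using $q U_m = U_{m+1}$ and $U_n = p_0\cdots p_{n-1} U_0 + 1$ to shift indices.

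Finally, I would remark why the tempting one-line deduction ``apply the Theorem to the dual data $\alpha' = p_0\cdots p_{n-1}\beta$, $\beta' = \alpha$, $b' = n-b$ furnished by Lemma 2.1, then shift the index'' does not by itself suffice: matching terms shows it only yields $(p_i p_{i+1}\cdots p_{i+n-b-1})\, E_{i+n-b} \in \mathbb{Z}$, i.e. integrality of the desired quantity up to a spurious product of the $p$'s, which is strictly weaker than what is claimed. This is precisely the obstacle that the uniform argument above sidesteps; a route staying within the paper's framework would instead reduce to $0 \le i \le n-1$ by periodicity and recompute the coefficients $M_j$ in the wrap-around case $i+b \ge n$, where the two subcases $m+b < n$ and $m+b \ge n$ invoke the two integrality statements of Lemma 2.2 respectively.
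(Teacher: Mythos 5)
Your proof is correct, and it takes a genuinely different and more economical route than the paper's. The paper reduces to $0\le i<n$ by periodicity, quotes Theorem 3.1 when $i+b<n$, and in the wrap-around case $i+b\ge n$ multiplies the target quantity by $p_{i+b-n}p_{i+b-n+1}\cdots p_{i-1}$, turning it into $\beta p_0p_1\cdots p_{n-1}x_{i+b-n}+\alpha p_{i+b-n}\cdots p_{i-1}x_i$, which is an integer by Lemma 2.1 combined with Theorem 3.1 applied to the dual data; the multiplication is harmless because that product of $p$'s is coprime to $D=q^n-p_0p_1\cdots p_{n-1}$ (any common prime factor would divide $q^n$, contradicting $(p_i,q)=1$), so it cannot create or destroy integrality. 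Your argument instead pushes the affine recurrence $q^b x_i=p_i\cdots p_{i+b-1}x_{i+b}+N_{i,b}$ through the linear combination, reducing the whole statement to the two facts $D\mid \alpha+\beta q^b$ and $Dx_i\in\mathbb{Z}$; this needs no case split, no $M_j$ bookkeeping, and it genuinely subsumes Theorem 3.1 rather than relying on it. What the paper's route buys is consistency with its $U$-expansion machinery; what yours buys is uniformity and brevity. One correction to your closing paragraph: the deduction you dismiss --- that the dual data only yield $(p_i\cdots p_{i+n-b-1})\,E_{i+n-b}\in\mathbb{Z}$, which you call ``strictly weaker'' --- is exactly the step the paper takes, and it is not a dead end: since the denominator of $E_{i+n-b}$ divides $D$ and each $p_j$ is coprime to $D$, the spurious factor can be stripped off, and this coprimality observation (stated explicitly in the paper's proof) is how the paper closes the argument. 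So that route is viable as it stands; it requires the coprimality remark, not a recomputation of the $M_j$ in the wrap-around case.
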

\begin{proof} 
Since it was assumed that $x_i=x_j$ if $i\equiv j (\bmod n)$, without loss of generality, we can suppose that $0\le i <n$. The case $i+b< n$ was considered in Theorem 2.1. So, we can suppose that $i+b\ge n$. Since $0<b<n$, we have $0\le i+b-n<i$, and therefore $x_{i+b}=x_{i+b-n}$. Consequently,
$$
\alpha x_i+\beta p_i p_{i+1}\ldots p_{i+b-1} x_{i+b}=\alpha x_i+\beta p_i p_{i+1}\ldots p_{i+b-1} x_{i+b-n}.
$$
Multiply this number by $p_{i+b-n}p_{i+b-n+1}\ldots p_{i-1}$, which is relatively prime to  $q^{n}-p_0 p_1\ldots p_{n-1}$, and therefore can not change the property of being or not being an integer for the number $\alpha x_i+\beta p_i p_{i+1}\ldots p_{i+b-1} x_{i+b}$, we obtain
$$
\beta p_0 p_{1}\ldots p_{n-1} x_{i+b-n}+\alpha p_{i+b-n} p_{i+b-n+1}\ldots p_{i-1} x_{i},
$$
which is an integer by Lemma 2.1 and Theorem 3.1.
\end{proof}

\begin{remark} The above results are trivially true for the cases when $b=0$ and $b=n$. Indeed, for the case when $b=0$, if $(\alpha +\beta)U_0$ is an integer, then $(q^n-p_0p_1\ldots p_{n-1})|(\alpha +\beta)$, and therefore $(\alpha +\beta)x_i$ is also an integer for $i=0,1,\ldots, n-1$. For the case when $b=n$, if $\alpha U_0 +\beta U_n=\alpha U_0 +\beta p_0p_1\ldots p_{n-1}U_0+\beta$ is an integer, then $(\alpha  +\beta p_0p_1\ldots p_{n-1})U_0$ is also an integer ($\beta$ is an integer), and therefore $(q^n-p_0p_1\ldots p_{n-1})|(\alpha +p_0p_1\ldots p_{n-1}\beta)$. Consequently $(\alpha +\beta p_0p_1\ldots p_{n-1})x_i$ is an integer for $i=0,1,\ldots, n-1$.
\end{remark}

\begin{corollary}{
If one of the numbers $x_j$ $(j\in \{0, 1,\ldots , n-1\})$ is a fraction with denominator $d$ in its simplest form then all of $x_i$ for $i=0, 1,\ldots , n-1$, are like fractions with the same denominator $d$.}
\end{corollary}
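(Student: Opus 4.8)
The plan is to show that any two \emph{consecutive} terms $x_i$ and $x_{i+1}$ of the cycle have the same denominator when written in lowest terms; since the indices are read modulo $n$, walking once around the cycle then forces all of $x_0,x_1,\ldots,x_{n-1}$ to share a single reduced denominator, which must be the denominator $d$ coming from the given $x_j$. So the whole statement reduces to the single step $d_i=d_{i+1}$, where $d_i$ denotes the denominator of $x_i$ in lowest terms.

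First I would record the one arithmetic relation linking consecutive terms. I would invoke the preceding Corollary 3.2 with $\alpha=q$, $\beta=-1$ and $b=1$: this choice is admissible because $\alpha U_0+\beta U_b=qU_0-U_1=\frac{q}{q^n-p_0p_1\ldots p_{n-1}}-\frac{q}{q^n-p_0p_1\ldots p_{n-1}}=0$ is an integer. For $b=1$ the product $p_ip_{i+1}\ldots p_{i+b-1}$ collapses to $p_i$, so the corollary yields that for every $i$ the number $qx_i-p_ix_{i+1}$ is an integer (it is in fact $k_i$). This relation is the only input I will need from the main results.

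Next I would secure the coprimality facts that make this relation informative. Writing $D:=q^n-p_0p_1\ldots p_{n-1}$, each $x_i$ is a fraction with denominator dividing $D$, hence $d_i\mid D$ for all $i$. From the standing hypothesis $(p_i,q)=1$ I would check that $(q,D)=1$ and $(p_i,D)=1$: a prime dividing $q$ and $D$ would divide $p_0p_1\ldots p_{n-1}$, and a prime dividing $p_i$ and $D$ would divide $q^n$, either way contradicting $(p_i,q)=1$. Since $d_i\mid D$, it follows that $(q,d_i)=1$ and $(p_i,d_i)=1$ for every $i$.

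The final step is to compare denominators in $qx_i-p_ix_{i+1}\in\mathbb{Z}$. Writing $x_i=a_i/d_i$ and $x_{i+1}=a_{i+1}/d_{i+1}$ in lowest terms, the coprimality of $q$ and of $a_i$ with $d_i$ shows $qx_i$ is already reduced, with denominator exactly $d_i$; likewise $p_ix_{i+1}$ is reduced with denominator exactly $d_{i+1}$. A difference of two reduced fractions can be an integer only when their denominators coincide, so $d_i=d_{i+1}$. Iterating over $i=0,1,\ldots,n-1$ with the cyclic convention $x_n=x_0$ gives $d_0=d_1=\cdots=d_{n-1}$, and in particular each equals $d$. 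I expect the only genuine obstacle to be the coprimality bookkeeping in the third paragraph: everything hinges on $q$ and each $p_i$ being prime to the reduced denominators, and without those facts the multiplications by $q$ and $p_i$ could shrink the denominators and destroy the equality $d_i=d_{i+1}$. Once that is in place, the denominator comparison and the cyclic iteration are routine.
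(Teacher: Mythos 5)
Your proof is correct, and it follows the same overall skeleton as the paper's proof: link consecutive terms of the cycle by an integer linear relation obtained from the main result with $b=1$, then use the coprimality of the multipliers with the reduced denominators (all of which divide $D=q^n-p_0p_1\ldots p_{n-1}$) to propagate a single denominator around the cycle. The difference lies in the choice of coefficients, and it is a genuine simplification. The paper takes $\alpha=1$, $\beta=-q^{\phi(|D|)-1}$, $b=1$, so that $\alpha+\beta q^b=1-q^{\phi(|D|)}$ is divisible by $D$ by Euler's theorem, and then propagates the denominator backwards from $x_j$ through the relations $x_{i-1}+\beta p_{i-1}x_i\in\mathbb{Z}$. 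You instead take $\alpha=q$, $\beta=-1$, $b=1$, for which $\alpha U_0+\beta U_1=0$ trivially, yielding $qx_i-p_ix_{i+1}\in\mathbb{Z}$; as you note, this quantity is just $k_i$, i.e., your relation is nothing but the cycle recurrence $x_i=B_i(x_{i+1})=\frac{p_ix_{i+1}+k_i}{q}$ itself. Two consequences of your choice: it eliminates the totient function (and Euler's theorem) from the argument entirely, and it shows this corollary does not really need Theorem 3.1 or Corollary 3.2 at all — only the recurrence and the coprimality facts $(q,D)=(p_i,D)=1$, which you verify correctly. The small price you pay is that your coefficient of $x_i$ is $q$ rather than $1$, so you need the extra (easy) observation that multiplying a reduced fraction with denominator $d_i$ by $q$ does not change its reduced denominator, whereas the paper's normalization $\alpha=1$ lets it read off the denominator of $x_{i-1}$ directly. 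Both arguments are sound; yours is the more elementary of the two.
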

\begin{proof} It was mentioned earlier that one can always take $\alpha =1$, $\beta =-q^{\phi (|q^n-p_0p_1\ldots p_{n-1}|)-1}$, and $b=1$. Then by the main result, the number $x_i+\beta p_i x_{i+1}$ is an integer for $i=0, 1,\ldots , n-1$. Note that $d$ is a divisor of $q^n-p_0p_1\ldots p_{n-1}$, and therefore $d$ is relatively prime to $\beta$ and all $p_i$ for $i=0, 1,\ldots , n-1$. Since $x_j$ is a fraction with denominator $d$, the other numbers $x_{j-1}$, $x_{j-2}$, $x_{j-3},...$ are all like fractions with the same denominator $d$ in their simplest forms.
\end{proof}

\begin{corollary}{
If one of the numbers $x_j$ $(j\in \{0, 1,\ldots , n-1\})$ is an integer then all of $x_i$ for $i=0, 1,\ldots , n-1$, are integers.}
\end{corollary}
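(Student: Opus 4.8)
The plan is to obtain this as the special case $d=1$ of the preceding corollary. The key observation is simply that an integer is exactly a fraction whose denominator in lowest terms equals $1$. So if some $x_j$ with $j\in\{0,1,\ldots,n-1\}$ is an integer, it is a fraction with denominator $d=1$ in its simplest form, and the preceding corollary immediately gives that every $x_i$ for $i=0,1,\ldots,n-1$ is a fraction with the same denominator $1$, that is, an integer.

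If one prefers an argument that does not invoke the denominator statement, I would instead replay its mechanism directly from the main result. Choosing $\alpha=1$, $\beta=-q^{\phi(|q^n-p_0p_1\ldots p_{n-1}|)-1}$, and $b=1$ makes $\alpha U_0+\beta U_1$ an integer (as recorded just before Lemma 2.1), so the main result yields that $x_i+\beta p_i x_{i+1}$ is an integer for every $i$. Taking $i=j-1$, the number $x_{j-1}+\beta p_{j-1}x_j$ is an integer; since $x_j$, $\beta$, and $p_{j-1}$ are integers, the product $\beta p_{j-1}x_j$ is an integer, forcing $x_{j-1}$ to be an integer as well. Decreasing the index one step at a time (and using $x_i=x_{i\bmod n}$) propagates integrality from $x_j$ backward through $x_{j-1},x_{j-2},\ldots$ all the way around the cycle, so all of $x_0,x_1,\ldots,x_{n-1}$ are integers.

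I do not expect any real obstacle here: the substantive work has already been done in the main theorem and its corollaries, and the integer case only requires the trivial remark that an integer multiple of an integer stays an integer. The single point worth stating explicitly is that $\beta$ and the $p_i$ are relatively prime to $q^n-p_0p_1\ldots p_{n-1}$, so they cannot introduce new denominators---which is precisely why the general denominator-preservation argument specializes cleanly to $d=1$.
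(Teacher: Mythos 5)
Your proposal is correct and matches the paper's intent exactly: the paper states this corollary without proof, as the immediate specialization $d=1$ of the preceding corollary on denominators, which is precisely your first argument. Your backup argument (propagating integrality backward via $x_{i}+\beta p_i x_{i+1}\in\mathbb{Z}$ with $\alpha=1$, $\beta=-q^{\phi(|q^n-p_0p_1\ldots p_{n-1}|)-1}$, $b=1$) is also sound and is just the paper's own proof of that preceding corollary replayed in the integer case, so there is nothing genuinely different here.
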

\section{Examples}
Let us take $q=3$. Consider composition of functions $P=B_0 \circ B_1 \circ B_2\circ B_3$, where $B_0 (x)=\frac{-5x-2}{3}$, $B_1 (x)=\frac{2x+1}{3}$, $B_2 (x)=\frac{7x+6}{3}$, $B_3 (x)=\frac{-x+3}{3}$. Here $n=4$, $p_0=-5$, $k_0=-2$, $p_1=2$, $k_1=1$, $p_2=7$, $k_2=6$, $p_3=-1$, $k_3=3$, and $q^n-p_0 p_1 p_2 p_3=3^4-(-5)\cdot 2\cdot 7\cdot (-1)=11$.

The solution of equation $B_0 \circ B_1 \circ B_2 \circ B_3 (x)=x$ is the number $x_0=-69/11$. Note that $x_0=x_4$. We can also find the other numbers $x_1=x_5=37/11$, $x_2=50/11$, $x_3=12/11$, by solving the equations $B_1 \circ B_2 \circ B_3 \circ B_0 (x)=x$, $B_2 \circ B_3 \circ B_0 \circ B_1 (x)=x$, $B_3 \circ B_0 \circ B_1 \circ B_2 (x)=x$, respectively. We also find the numbers $U_i=2^i/11$ ($i=0,1,2,3,4$). Note that $4U_0+2U_2=2$ is an integer, which is equivalent to say that $11|(4+2\cdot 3^2)$. So, we can take $\alpha=4$, $\beta=2$, and $b=2$. We observe that $4x_i+2p_i p_{i+1} x_{i+2}$
is an integer for each of $i=0,1,2,4$. Indeed,
\begin{center}
\begin{tabular}{lll}
$4x_0+2p_0 p_1 x_2$ & $=4\cdot (-69/11)+2\cdot (-5)\cdot 2\cdot (50/11)$ &$=-116,$\\
$4x_1+2p_1 p_2 x_3$ & $=4\cdot (37/11)+2\cdot 2\cdot 7\cdot (12/11)$ & $=44,$\\
$4x_2+2p_2 p_3 x_4$ & $=4\cdot (50/11)+2\cdot 7\cdot (-1)\cdot (-69/11)$ & $=106,$\\
$4x_3+2p_3 p_4 x_5$ & $=4\cdot (12/11)+2\cdot (-1)\cdot (-5)\cdot (37/11)$ & $=38,$\\
\end{tabular}
\end{center}
are all integers. 

Now, note that $-5U_0-13U_1=-4$ is also an integer, which is equivalent to say that $11|-44=-5-13\cdot 3^1$. This means that we can take $\alpha =-5$, $\beta=-13$, and $b=1$. So, $-5x_i-13p_i x_{i+1}$
should be an integer for each of $i=0,1,2,3$. Indeed,
\begin{center}
\begin{tabular}{lll}
$-5x_0-13p_0 x_1$ & $=-5\cdot (-69/11)-13\cdot (-5)\cdot (37/11)$ & $=250,$\\
$-5x_1-13p_1 x_2$ & $=-5\cdot (37/11)-13\cdot 2\cdot (50/11)$ & $=-135,$\\
$-5x_2-13p_2 x_3$ & $=-5\cdot (50/11)-13\cdot 7\cdot (12/11)$ & $=-122,$\\
$-5x_3-13p_3 x_4$ & $=-5\cdot (12/11)-13\cdot (-1)\cdot (-69/11)$ & $=-87,$\\
\end{tabular}
\end{center}
are all integers. These observations are in perfect agreement with the main results of the current paper.

\section{Applications} If $p_i\in \{1,p\}$, where $p$ is a nonzero integer and $(p,q)=1$ then there are two type of functions $S_k(x)=\frac{px+k}{q}$ and $T_k(x)=\frac{x+k}{q}$. Let us denote by $m$ the number of $S$ functions in $P$. Then $U_i=\frac{q^{i}}{q^n-p^m},$ for $i=0,1,\ldots ,n$. Denote by $\sigma(i,j)$ the number of $S$ functions in the fragment $B_iB_{i+1}\ldots B_{j-1}$ of $P$. In particular, $\sigma(i,i)=0$, because it corresponds to the empty fragment of $P$. Let $x_i$ be the solution of the equation $B_i\circ B_{i+1} \circ B_{i+2} \circ \ldots \circ B_{i+n-1}(x)=x$, where all the indices are taken modulo $n$. Take $\alpha=p^l$ for some non-negative integer $l$, and $\beta= -1$. For this special case the main result of the current paper can be written as $p^l x_i- p^{\sigma(i,i+b)}x_{i+b}\in \mathbb{Z}$. This can be visualized by writing $x_i$ as $p$-adic numbers in a table and noting that $p$-adic digits at the corresponding place values of $p^lx_i$ and $p^{\sigma(i,i+b)}x_{i+b}$ are identical, except for finitely many digits at lower place values. Let us demonstrate this on an example. Let $q=2$, $p=11$, $P=B_0 \circ B_1 \circ B_2 \circ B_3 \circ B_4 \circ B_5 \circ B_6$, where $B_0 (x)=B_1 (x)=B_2 (x)=B_3 (x)=B_5 (x)=T_0(x)$, $B_4 (x)=S_5(x)$, and, $B_6 (x)=S_3(x)$. Here $n=7$, $m=2$, $q^n-p^m=2^7-11^2=7$ and $U_i=2^i/7$ ($i=0,1,2,\ldots$). Note that $$U_0-U_3=-1,\ \ 
11U_0-U_2=1,\ \ 11^2U_0-U_1=17,\ \ 11^3U_0-U_0=190,$$ are all integers, which is equivalent to say that $$7|(1-2^3),\ \ 7|(11-2^2),\ \ 7|(11^2-2^1),\ \ 7|(11^3-2^0),$$ respectively. By the main result of the current paper we can say that $$x_i-11^{\sigma(i,i+3)}x_{i+3},\ \ 11x_i-11^{\sigma(i,i+2)}x_{i+2},\ \ 11^2x_i-11^{\sigma(i,i+1)}x_{i+1},\ \ 11^3x_i-x_{i},$$ are also integers for $i=0,1,2,\ldots$. The functions $B_i$, the numbers $x_i$, their 11-adic representations (the letter A means digit 10), and the patterns formed by the digits can be seen in the following table:
\begin{center}
\begin{tabular}{ll|ll|ll|ll|ll|ll|lll}
$x_0=53/7$ & $=\ldots$   &7 & 9 & 4 & 7 & 9 & 4 & 8 & 6 &   &    & &   & $S_3=B_6$ \\
$x_6=302/7$ & $=\ldots$ &9 & 4 & 7 & 9 & 4 & 7 & 9 & 8 & 7 &    & &   & $T_0=B_5$ \\
$x_5=151/7$ & $=\ldots$ &4 & 7 & 9 & 4 & 7 & 9 & 4 & 9 & 9 &    & &   & $S_5=B_4$ \\
$x_4=848/7$ & $=\ldots$ &7 & 9 & 4 & 7 & 9 & 4 & 7 & A & 4 & 8 & &   &  $T_0=B_3$\\
$x_3=424/7$ & $=\ldots$ &9 & 4 & 7 & 9 & 4 & 7 & 9 & 5 & 2 & 4 & &   & $T_0=B_2$ \\
$x_2=212/7$ & $=\ldots$ &4 & 7 & 9 & 4 & 7 & 9 & 4 & 8 & 1 & 2 & &   & $T_0=B_1$ \\
$x_1=106/7$ & $=\ldots$ &7 & 9 & 4 & 7 & 9 & 4 & 7 & 9 & 6 & 1 & &   &  $T_0=B_0$\\
$x_0=53/7$ & $=\ldots$   &9 & 4 & 7 & 9 & 4 & 7 & 9 & 4 & 8 & 6 & &   &  \\
\end{tabular}
\end{center}
More examples of such patterns and applications to the original $3x+1$ problem are given in \cite{aliyev} and the references therein.

The Finite Cycles Conjecture mentioned at the beginning of this paper claims that the only integer cycles for $3x+1$ problem are the ones generated by $x_0=0$, $x_0=-1$, $x_0=1$, $x_0=-5$, and $x_0=-17$. These numbers correspond to compositions $P_1=T$, $P_2=S$, $P_3=T\circ S$, $P_4=T\circ S\circ S$, and $P_5=T\circ T\circ T\circ S\circ S\circ S\circ T\circ S\circ S\circ S\circ S$. For these compositions $q=2$, $p_i=1$ or $3$, and the numbers in the following table are calculated.
\begin{center}
\begin{tabular}{|l|l|l|l|}
\hline
$x_0$ & $P$   &$n$ & $q^n-p_0p_1\ldots p_{n-1}$  \\
\hline
$0$ & $P_1$ &1 & $2^1-1=1$   \\
$-1$ & $P_2$ &1 & $2^1-3=-1$   \\
$1$ & $P_3$ &2 &$2^2-3=1$ \\
$-5$ & $P_4$ &3 & $2^3-3\cdot 3=-1$ \\
$-17$ & $P_5$ &11 & $2^{11}-3^7=-139$  \\
\hline
\end{tabular}
\end{center}

These compositions with integer $x_0$ also show that the main results of the current paper, namely Theorem 3.1 and Corollary 3.2 can not be written as "iff" statements. Indeed, if the numbers $x_i$ $(i\in \{0, 1,\ldots , n-1\})$ are integers then $\alpha x_i+\beta p_i p_{i+1}\ldots p_{i+b-1} x_{i+b}$ is an integer for any choice of integers $\alpha ,\beta , b$, which is not the case for $\alpha U_0+\beta U_b$. Nevertheless, Lemma 2.1 can be written as an "iff" statement.

The composition $P_5$ is different from the others in the sense that $q^n-p_0p_1\ldots p_{n-1}\ne \pm 1$ but $x_0$ is still an integer. The compositions $P_i^k$ ($i=1,2,\ldots ,5$; $k=1,2,\ldots$), defined recursively by $P_i^1=P_i$ and $P_i^{k+1}=P_i^k\circ P_i$ ($i=1,2,\ldots ,5$; $k=1,2,\ldots$) also have integer $x_0$. Determination of all such compositions with integer $x_0$ or proving that all other compositions correspond to non-integer rational $x_0$ will be helpful for the solution of $3x+1$ problem. The results of the current paper shed some light on the structure of rational cycles and therefore they might be useful for this purpose.
\section{Conclusion}

In the paper some generalizations of Collatz conjecture or $3x+1$ problem are studied. Some results are obtained proving that special linear combinations of the terms of rational cycles are integers. Demonstrations of these results on some concrete examples are given. These results are then used to explain some patterns of digits in $p$-adic representations of the rational cycles.
\subsection*{Acknowledgements}


\begin{thebibliography}{HD82}




\normalsize
\baselineskip=17pt
 
\bibitem{aliyev} Y.N. Aliyev, \textit{The $3x+1$ Problem For Rational Numbers : Invariance of Periodic Sequences in $3x+1$ Problem}, 2020 IEEE 14th International Conference on Application of Information and Communication Technologies (AICT), Tashkent, Uzbekistan, 2020, 1-4. \url{https://doi.org/10.1109/AICT50176.2020.9368585}

\bibitem{con} J.H. Conway, \textit{On Unsettleable Arithmetical Problems}, The American Mathematical Monthly 120(3), 192-198 (2013). \url{https://doi.org/10.4169/amer.math.monthly.120.03.192}

\bibitem{ev} C.J. Everett,
Iteration of the number-theoretic function $f(2n) = n, f(2n + 1) = 3n + 2$,
Advances in Mathematics, 25(1) (1977) 42-45. \url{https://doi.org/10.1016/0001-8708(77)90087-1}

\bibitem{kor} I. Korec, \textit{A density estimate for the $3x+1$ problem}, Math. Slovaca 44 (1994), no. 1, 85–89. \url{https://eudml.org/doc/32414}

\bibitem{laar} T. Laarhoven, B. de Weger, \textit{The Collatz conjecture and De Bruijn graphs}, Indagationes Mathematicae, 24(4), 2013, 971-983. \url{https://doi.org/10.1016/j.indag.2013.03.003}

\bibitem{lag} J.C. Lagarias (Ed.), \textit{The Ultimate Challenge: The $3x+1$ Problem}, 344 pp., AMS, 2010.

\bibitem{lag2} J.C. Lagarias, \textit{The set of rational cycles for the $3x + 1$ problem}, Acta Arith. 56 (1990) 33–53. \url{http://eudml.org/doc/206298}

\bibitem{leht} E. Lehtonen, \textit{Two undecidable variants of Collatz's problems}, Theor. Comput. Sci., 407, 1–3 (2008) 596–600. \url{https://doi.org/10.1016/j.tcs.2008.08.029}

\bibitem{levy} D. Levy, \textit{Injectivity and surjectivity of Collatz functions}, Discrete Mathematics, 285, Issues 1–3,
2004, 191-199. \url{https://doi.org/10.1016/j.disc.2004.03.008}

\bibitem{matt} K. Matthews, A. Watts, \textit{A generalization of Hasse's generalization of the Syracuse algorithm}, Acta Arithmetica 43.2 (1984) 167-175. \url{http://eudml.org/doc/205897}

\bibitem{matt2} K. Matthews, A. Watts, \textit{A Markov approach to the generalized Syracuse algorithm}, Acta Arithmetica 45.1 (1985): 29-42. \url{http://eudml.org/doc/205953}

\bibitem{rouet} J.L. Rouet, M.R. Feix, \textit{A Generalization of the Collatz Problem. Building Cycles and a Stochastic Approach}, Journal of Statistical Physics 107, 1283–1298 (2002). \url{https://doi.org/10.1023/A:1015122111180}

\bibitem{tao} T. Tao,  \textit{Almost all orbits of the Collatz map attain almost bounded value}, Forum of Mathematics, Pi, 10, E12, 2022. \url{https://doi.org/10.1017/fmp.2022.8}

\bibitem{terras} R. Terras, \textit{A stopping time problem on the positive integers}, Acta Arith.30.3 (1976), 241. \url{http://eudml.org/doc/205476}



\end{thebibliography}
\end{document}